\def\gam{\gamma }
\def\RR{\mathbb R}
\newcommand{\set}[1]{\{#1\}}
\providecommand{\abs}[1]{\lvert#1\rvert}
\providecommand{\norm}[1]{\lVert#1\rVert}
\newcommand{\remove}[1]{ }
\newtheorem{theorem}{Theorem}[section]
\newtheorem{lemma}[theorem]{Lemma}
\theoremstyle{definition}
\theoremstyle{remark}
\newtheorem*{remarks}{Remarks}
\numberwithin{equation}{section}
\begin{document}
\title[Ingham--Beurling type estimates]{Explicit multidimensional Ingham--Beurling type estimates}
\author{Vilmos Komornik}
\address{Delft University of Technology, Mekelweg 4, 2628 CD Delft, The Netherlands}
\subjclass[2000]{Primary 42B99; secondary 42A99}
\keywords{Nonharmonic Fourier series, Ingham's theorem, Beurling's theorem}
\date{December 1st, 2008}

\begin{abstract}
Recently a new proof was given for Beurling's Ingham type theorem on one-dimensional nonharmonic Fourier series, providing explicit constants.
We improve this result by applying a short elementary method instead of the previous complex analytical approach. 
Our proof equally works in the multidimensional case.
\end{abstract}

\maketitle

\section{Introduction}\label{s1}

Let $(\omega_k)_{k\in K}$ be a family of vectors in $\RR^N$ satisfying the gap condition
\begin{equation*}
\gam:=\inf_{k\ne n}\abs{\omega_k-\omega_n}>0.
\end{equation*}
Let $K=K_1\cup \cdots \cup K_m$ be a finite partition of $K$ and set
\begin{equation*}
\gam_j:=\inf\set{\abs{\omega_k-\omega_n}\ :\ k,n\in K_j\quad\text{and}\quad k\ne n},\quad j=1,\ldots, m.
\end{equation*}
We denote by $B_R$ the open ball of radius $R$ in $\RR^N$, centered at the origin, and by $\mu$ the first eigenvalue of $-\Delta$ in the Sobolev space $H_0^1(B_1)$. Finally, we set
\begin{equation*}
R_j:=\frac{2\sqrt{\mu}}{\gam_j}
\quad\text{and}\quad
R_0:=R_1+\cdots+R_m.
\end{equation*}

We prove the following theorem:

\begin{theorem}\label{t11}
There exist two positive constants $c_1$ and $c_2$, depending only on $N$, $\gam$, $m$ and $\gam_1$,\ldots, $\gam_m$, such that if $R_0<R\le 2R_0$, then
\begin{equation}\label{11}
c_1 (R-R_0)^{5m-4+2N}\sum_{k\in K}\abs{x_k}^2
\le \int_{B_R}\left\vert \sum_{k\in K} x_k e^{i\omega_k\cdot t}\right\vert^2\ dt
\le c_2 \sum_{k\in K}\abs{x_k}^2
\end{equation}
for every square summable family $(x_k)_{k\in K}$ of complex numbers.
\end{theorem}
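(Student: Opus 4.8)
The plan is to treat the two inequalities in \eqref{11} separately and to reduce each of them, via Parseval's formula, to a single off-diagonal estimate. Writing $f(t)=\sum_{k\in K}x_ke^{i\omega_k\cdot t}$, for any reasonable weight $w\ge0$ on $\RR^N$ one has
\[
\int_{\RR^N}w\,\abs{f}^2\,dt=\widehat w(0)\sum_{k\in K}\abs{x_k}^2+\sum_{k\ne n}x_k\bar x_n\,\widehat w(\omega_n-\omega_k),
\]
where $\widehat w(0)=\int w$, and by the Cauchy--Schwarz inequality the last sum is bounded in absolute value by $S(w)\sum_k\abs{x_k}^2$ with $S(w):=\sup_{k}\sum_{n\ne k}\abs{\widehat w(\omega_n-\omega_k)}$. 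Thus both bounds follow once $w$ is chosen so that $\widehat w(0)$ and $S(w)$ are controlled and, for the lower bound, $S(w)<\widehat w(0)$.

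For the upper bound I would fix a smooth majorant $W\ge\mathbf 1_{B_R}$, for instance a dilate of a fixed Schwartz bump, so that $\widehat W$ is nonnegative and rapidly decreasing. Then $\int_{B_R}\abs f^2\le\int W\abs f^2$, and since the points $\set{\omega_n-\omega_k:n\ne k}$ are $\gam$-separated, the fast decay of $\widehat W$ makes $S(W)$ finite; this yields the right-hand inequality with a constant $c_2$ depending only on the stated data. This half is routine, so I expect no real difficulty here.

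The lower bound is the heart of the matter and is where the eigenvalue $\mu$ enters. Let $e\ge0$ be the first Dirichlet eigenfunction of $-\Delta$ on $B_1$, so $-\Delta e=\mu e$ and $e=0$ on $\partial B_1$, and for $\rho>0$ put $e_\rho(t)=e(t/\rho)$, which solves $-\Delta e_\rho=(\mu/\rho^2)e_\rho$ on $B_\rho$. For each family I would use $w_j:=e_{R_j}^2$, supported in $B_{R_j}$, and take the convolution $w:=w_1*\cdots*w_m$; its support is the Minkowski sum, a ball of radius $R_0=R_1+\cdots+R_m$, which sits strictly inside $B_R$ with room $R-R_0>0$, while $\widehat w=\prod_j\widehat{w_j}\ge0$ and $\widehat w(0)=\prod_j\int w_j>0$. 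After normalising we may assume $0\le w\le\mathbf 1_{B_R}$, so $\int_{B_R}\abs f^2\ge\int w\abs f^2$ and the displayed identity applies. The decay of each factor is produced by integrating by parts against $\Delta e^{-i\xi\cdot t}=-\abs\xi^2e^{-i\xi\cdot t}$: since $w_j$ and $\nabla w_j$ vanish on $\partial B_{R_j}$, one gets $\widehat{w_j}(\xi)=-\abs\xi^{-2}\widehat{\Delta w_j}(\xi)$, and inserting $\Delta w_j=2\abs{\nabla e_{R_j}}^2-\tfrac{2\mu}{R_j^2}w_j$ gives the identity
\[
\Big(1-\frac{2\mu}{R_j^2\abs\xi^2}\Big)\widehat{w_j}(\xi)=-\frac{2}{\abs\xi^2}\,\widehat{\abs{\nabla e_{R_j}}^2}(\xi).
\]
Because $R_j=2\sqrt\mu/\gam_j$, the coefficient on the left equals $1-\tfrac{2\mu}{R_j^2\abs\xi^2}\ge\tfrac12$ as soon as $\abs\xi\ge\gam_j$, so this already forces $\widehat{w_j}$ to be small beyond the $j$-th gap, and the margin improves quantitatively with the slack $R-R_0$.

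The remaining work is bookkeeping. I would split the off-diagonal sum in $S(w)$ according to the pair of families to which $k$ and $n$ belong: a same-family pair in $K_j$ satisfies $\abs{\omega_n-\omega_k}\ge\gam_j$ and is controlled by the factor $\widehat{w_j}$ above, while a cross-family pair is controlled through the overall gap $\gam$ together with the $\gam_j$-separation of the frequencies inside each family. The hard part, and the step I expect to fight with, is making the decay of $\widehat w$ summable over these $\gam$-separated difference sets in \emph{every} dimension $N$: a single integration by parts only yields $\abs{\widehat{w_j}(\xi)}=O(\abs\xi^{-2})$, which is not summable for $N\ge2$, so the argument must exploit the product structure of $\widehat w=\prod_j\widehat{w_j}$ (and, if needed, further integrations by parts with explicit control of the boundary fluxes of $e$) to gain enough decay. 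Tracking how the various margins degenerate as $R\downarrow R_0$ then converts the slack into the explicit factor $(R-R_0)^{5m-4+2N}$ and produces the constant $c_1$; the uncertainty principle---one cannot have $w$ compactly supported and $\widehat w$ compactly supported simultaneously---is exactly the tension that makes these quantitative estimates, rather than an exact cutoff, unavoidable.
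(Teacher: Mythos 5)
Your upper bound is fine (it is the routine half; the paper's Lemma \ref{l21} does the same job with a compactly supported frequency-side function and a covering argument instead of a Schwartz majorant). The lower bound, however, has a genuine gap, and it is not the summability problem you flag: the single-weight Schur-test strategy itself cannot prove the theorem. Your plan needs $S(w)<\widehat w(0)$, where $S(w)$ sums \emph{absolute values} of $\widehat w$ over the difference set, and this fails for \emph{every} admissible weight once several families are interleaved. Concretely, take $m\ge 3$ and let $K_1,\dots,K_m$ be translates of one $\Gamma$-separated set by the vectors $(j-1)\gam v$, $j=1,\dots,m$, with $\gam$ tiny; then $\gam_j=\Gamma$ for all $j$, while every $\omega_k$ has $m-1$ neighbours from the other families at distance at most $(m-1)\gam$. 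For any $w$ with $0\le w\le \mathbf 1_{B_R}$ one has $\abs{\widehat w(\xi)}\ge\int w\cos(\xi\cdot t)\,dt\ge\bigl(1-\tfrac12\abs{\xi}^2R^2\bigr)\widehat w(0)$, so $S(w)\ge (m-1)\bigl(1-\tfrac12(m-1)^2\gam^2R^2\bigr)\widehat w(0)>\widehat w(0)$ as soon as $\gam$ is small --- a legitimate configuration, since the theorem allows any $\gam>0$. So the Schur margin is negative no matter how $w$ is chosen; the theorem is still true there (its constant degenerates with $\gam$), but your route cannot reach it. Even for $m=2$, the one near neighbour leaves a margin of only $O(\gam^2R^2)\,\widehat w(0)$, and squeezing the within-family sums under it would force $\widehat w$ to vanish to second order at every difference $\omega_n-\omega_k$ with $n,k\in K_j$; for a generic non-arithmetic $\gam_j$-separated family these differences have quadratically growing counting function, which the Fourier transform of a nonzero function supported in $B_R$ (an entire function of exponential type $R$) cannot accommodate. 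The root cause is that taking absolute values discards precisely the cancellation that keeps $\int_{B_R}\abs{f}^2$ positive when frequencies from different families nearly coincide.

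This is exactly why the paper never compares one weight against all of $K$. Its lower bound is assembled in three steps: (i) Lemma \ref{l22}, a per-family bound, where your dual idea works in a clean form --- the frequency-side function $G=\bigl[(R_j+r)^2+\Delta\bigr](H_{\gam_j/2}*H_{\gam_j/2})$ is supported in $B_{\gam_j}$, so within one family the off-diagonal terms vanish \emph{exactly}, while its transform is $\le 0$ outside $B_{R_j+r}$; (ii) Lemma \ref{l24}, Haraux's trick, which adjoins a single foreign frequency to a family at the price of $r^5$ in place of $r$; (iii) Lemma \ref{l25} plus Kahane's convolution: for each $k$ one builds per-family biorthogonal functions $\psi_{k,j}$ and convolves them into $\rho_k$ with $\hat\rho_k(\omega_n)=\delta_{kn}$ for \emph{all} $n\in K$, supported in a ball of radius $\sum_j(R_j+2r)-r=R-r$ --- this support budget is exactly where $R>R_0=\sum_j R_j$ enters. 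Your instinct to convolve one function per family is Kahane's, but the factors must be biorthogonal to each family's exponentials (exact zeros), not positive eigenfunction weights: exact vanishing, built family by family, replaces the impossible smallness requirement on a single $\widehat w$. The exponent also has a precise provenance you cannot reproduce by bookkeeping: $\norm{\psi_{k,j}}_{L^1}^2\le\alpha r^{-5}$ for the $m-1$ foreign families and $\alpha r^{-1}$ for the own family give $r^{-(5m-4)}$, and the final pairing with a bump of width $r$ contributes $r^{2N}$. (A minor error besides: $\widehat{w_j}\ge 0$ is false --- in one dimension the transform of $e^2$ is Ingham's kernel $\pi^2\sin\xi/\bigl(\xi(\pi^2-\xi^2)\bigr)$, which changes sign --- though this is inessential to your argument.)
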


\begin{remarks}\mbox{}

\begin{itemize}
\item The estimates remain valid for all balls of radius $R$ by translation invariance. The explicit values of $c_1$ and $c_2$ can be computed easily from the proof below.

\item The proof remains valid if in the multidimensional case we replace $\abs{\omega_k-\omega_n}$ in the definition of $\gam$ and $\gam_j$ by the $L^p$ norm $\norm{\omega_k-\omega_n}_p$ for some $1\le p\le \infty$ and we replace $\mu$ in the statement of the theorem by the first eigenvalue of $-\Delta$ in the Sobolev space $H_0^1(B_1^p)$ with
    \begin{equation*}
    \set{x\in\RR^N\ :\ \norm{x}_p<1}.
    \end{equation*}

\item In the one-dimensional case we may assume that $(\omega_k)$ is a (doubly infinite) increasing sequence satisfying
\begin{equation*}
\gam:=\inf (\omega_{k+1}-\omega_k)>0.
\end{equation*}
Putting
\begin{equation*}
\gam_m':=\inf \frac{\omega_{k+m}-\omega_k}{m}
\end{equation*}
for some $m$, we deduce from the theorem that the estimates \eqref{11} hold on the interval $(-R,R)$ if $R_0:=2\pi/\gam_m'$ and $R_0<R\le 2R_0$.
This definition of $\gam_m'$ was introduced in \cite{BaiKomLor2000}, \cite{BaiKomLor2002} where it was also shown that the Ingham type condition $2\pi R/\gam_m'$ is equivalent to Beurling's original condition based on the P\'olya upper density. Our paper thus also provides a short elementary proof of Beurling's theorem.

\item The case $N=1$ of our result is stronger than a theorem recently proved in \cite{TenTuc2007} by employing deeper tools of complex analysis. Contrary to a claim in \cite{TenTuc2007} we show that an elementary approach used in \cite{Kom1992} leads to explicit constants, even in higher dimensions. In order to get better constants $c_1$ and $c_2$ (this is not necessary in order to get the exponent $5m-4+2N$) we use a new proof of the multidimensional Ingham type theorem given in \cite{BaiKomLor1998}, \cite{BaiKomLor1999}.
\end{itemize}
\end{remarks}

\section{Proof of the theorem}
\label{s2}

We introduce some notations. We fix an eigenfunction $H$ of $-\Delta$ in $H_0^1(B_1)$ corresponding to the first eigenvalue $\mu$. We may assume that $H>0$ in $B_1$. The function $H$ is even and we  extend it by zero to the whole $\RR^N$. We denote by $h$ the Fourier transform of $H$ defined by
\begin{equation*}
h(t):=\int_{B_1}H(x)e^{-ix\cdot t}\ dx=\int_{B_1}H(x)\cos (x\cdot t)\ dx;
\end{equation*}
we observe that
\begin{equation*}
\min_{\abs{t}\le\pi/2}\abs{h(t)}^2>0.
\end{equation*}

Putting $H_s(x):=H(s^{-1}x)$ where $s$ is a given positive number, a scaling argument shows that $H_s$ is the first eigenfunction of $-\Delta$ in $H_0^1(B_s)$ with the corresponding
eigenvalue $\mu_s=s^{-2}\mu$. Furthermore, we have
\begin{equation*}
\norm{H_s}_{L^2(B_s)}^2=s^N\norm{H}_{L^2(B_1)}^2
\end{equation*}
for all $1\le p<\infty$, and the Fourier transform $h_s$ of $H_s$ is given by the formula $h_s(t)=s^Nh(st)$.

We set
\begin{equation*}
r:=\frac{R-R_0}{2m}
\end{equation*}
for brevity, so that $0<r\le R_0/(2m)$ by the assumptions of the theorem.

The following two lemmas are due to Ingham \cite{Ing1936} in one dimension and to Kahane \cite{Kah1962} in several dimensions. We recall their simple proof given in \cite{BaiKomLor1998}, \cite{BaiKomLor1999}  in order to precise the nature of the constants appearing in the estimates. Here and in the sequel the letters $\alpha$, $\alpha_i$ stand for diverse positive constants depending only on $N$, $\gam$, $m$ and $\gam_m$, and the value of $\alpha$ may be different for different occurrences.

\begin{lemma}\label{l21}
We have
\begin{equation*}
\int_{B_R}\left\vert \sum_{k\in K} x_k e^{i\omega_k\cdot t}\right\vert^2\ dt\le \alpha_0 \sum_{k\in K}\abs{x_k}^2.
\end{equation*}
\end{lemma}

\begin{proof}
Setting $G:=H_{\gam/2}*H_{\gam/2}$ and denoting by $g$ its Fourier transform we have $g=\abs{h_{\gam/2}}^2\ge 0$. Writing
\begin{equation*}
x(t):=\sum_{k\in K} x_k e^{i\omega_k\cdot t}
\end{equation*}
for brevity and applying the Fourier inversion formula we obtain that
\begin{multline*}
\left( \min_{B_{\pi/\gam}}g\right) \int_{B_{\pi/\gam}} \abs{x(t)}^2\ dt\le\int_{\RR^N}g(t)\abs{x(t)}^2\ dt\\
=(2\pi)^N\sum_{k,n\in K}G(\omega_k-\omega_n)x_k\overline{x_n}
=(2\pi)^NG(0)\sum_{k\in K}\abs{x_k}^2
\end{multline*}
because for $k\ne n$ the vector $\omega_k-\omega_n$ lies outside the support of $G$ by our gap assumption.

This proves the lemma for $B_{\pi/\gam}$ instead of $B_R$ with
\begin{equation*}
\alpha_0'=\frac{(2\pi)^NG(0)}{\min_{B_{\pi/\gam}}g}\remove{=\frac{(2\pi)^N\norm{H_{\gam/2}}_2^2}{\min_{\abs{t}\le\pi/\gam}\abs{h_{\gam/2}(t)}^2}}
\end{equation*}
in place of $\alpha_0$. Since $B_R$  may be covered by at most $(1+{R\gam}/{\pi}) ^N$ translates of $B_{\pi/\gam}$ and $R\le 2R_0$,
the lemma follows with
\begin{equation*}
\alpha_0=\left(1+\frac{2R_0\gam}{\pi}\right)^N\alpha_0'.\qedhere
\end{equation*}
\end{proof}

\begin{lemma}\label{l22}
We have
\begin{equation*}
\alpha_j r\sum_{k\in K_j}\abs{x_k}^2\le \int_{B_{R_j+r}}\left\vert \sum_{k\in K_j} x_k e^{i\omega_k\cdot t}\right\vert^2\ dt,\quad j=1,\ldots, m.
\end{equation*}
\end{lemma}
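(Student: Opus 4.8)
The plan is to bound the integral from below by testing against the nonnegative weight furnished by the first eigenfunction itself, applied to the \emph{enlarged} ball. Write $\rho:=R_j+r$ and abbreviate $x(t):=\sum_{k\in K_j}x_ke^{i\omega_k\cdot t}$. Since $0\le H_\rho\le\max H_\rho$ and $H_\rho$ vanishes outside $B_\rho=B_{R_j+r}$, we have
\[
\int_{B_{R_j+r}}\abs{x(t)}^2\,dt\ge\frac{1}{\max H_\rho}\int_{\RR^N}H_\rho(t)\abs{x(t)}^2\,dt,
\]
and expanding $\abs{x(t)}^2$ and using $h_\rho(\xi)=\int H_\rho(t)e^{-i\xi\cdot t}\,dt$ turns the last integral into the Hermitian quadratic form
\[
\int_{\RR^N}H_\rho\abs{x}^2\,dt=\sum_{k,n\in K_j}x_k\overline{x_n}\,h_\rho(\omega_n-\omega_k).
\]
So it suffices to show that this Gram form is bounded below by $\alpha_j r\sum_{k\in K_j}\abs{x_k}^2$.

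The diagonal contributes the main term $h_\rho(0)\sum\abs{x_k}^2=\bigl(\int H_\rho\bigr)\sum\abs{x_k}^2$, a positive constant independent of $r$; everything hinges on controlling the off-diagonal terms. Here I would use the eigenvalue equation: extending $H_\rho$ by zero and computing its distributional Laplacian produces, besides $-\mu_\rho H_\rho$, a surface layer on $\partial B_\rho$, whence
\[
(\mu_\rho-\abs{\xi}^2)\,h_\rho(\xi)=\kappa_\rho\,\widehat{\sigma_{\partial B_\rho}}(\xi),
\]
where $\mu_\rho=\mu/\rho^2$ and $\kappa_\rho$ is the (constant) normal derivative of $H_\rho$. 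The only zero of the left factor sits at $\abs{\xi}=\sqrt{\mu}/\rho<\gam_j/2$, i.e. strictly inside the gap, so on the off-diagonal set $\abs{\omega_n-\omega_k}\ge\gam_j$ the denominator is bounded away from zero and $h_\rho$ inherits the decay of $\widehat{\sigma_{\partial B_\rho}}$. A diagonal-dominance (Gershgorin/Schur) estimate then bounds the smallest eigenvalue of the form below by $h_\rho(0)-\sup_n\sum_{k\ne n}\abs{h_\rho(\omega_n-\omega_k)}$, and the task is to show this margin is $\ge\alpha_j r$.

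I expect the off-diagonal estimate to be the only real difficulty, for two reasons. First, the margin must be shown to degrade only \emph{linearly} as $r\to0$: at $r=0$ the radius $R_j$ is exactly critical, where the estimate fails, so the factor $\alpha_j r$ is sharp in its dependence on $r$. In one dimension this is transparent, since $h_\rho$ is an explicit cosine-over-quadratic and the row sum telescopes by partial fractions into a cotangent that vanishes linearly at criticality. Second, in dimension $N\ge3$ the sphere-measure transform decays only like $\abs{\xi}^{-(N-1)/2}$, hence $h_\rho(\xi)=O(\abs{\xi}^{-(N+3)/2})$, which makes the crude row sum over a $\gam_j$-separated family borderline divergent; to close the argument one must either exploit the oscillation (Bessel asymptotics) of $\widehat{\sigma_{\partial B_\rho}}$ or smooth the weight slightly so as to gain decay without spoiling either the support constraint $\mathrm{supp}\,w\subset B_{R_j+r}$ or the linear-in-$r$ margin. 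The remaining ingredients---the scaling relations $\mu_\rho=\mu/\rho^2$ and $h_\rho(t)=\rho^N h(\rho t)$ recalled above---are routine bookkeeping.
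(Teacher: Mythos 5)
Your proposal is not a complete proof: the step you yourself identify as ``the only real difficulty'' --- bounding the off-diagonal part of the Gram form $\sum_{k,n}x_k\overline{x_n}\,h_\rho(\omega_n-\omega_k)$ --- is left unresolved, and the route you sketch for it cannot work as stated. Because $H_\rho$ is compactly supported, $h_\rho$ has no compact support, so the off-diagonal terms are genuinely present; your Gershgorin/Schur bound then needs $\sup_n\sum_{k\ne n}\abs{h_\rho(\omega_n-\omega_k)}\le h_\rho(0)-\alpha_j r$. As you note, the decay $h_\rho(\xi)=O(\abs{\xi}^{-(N+3)/2})$ makes the row sum over a $\gam_j$-separated family diverge for $N\ge 3$ (the shell of radius $\ell\gam_j$ contains $\sim\ell^{N-1}$ points, giving $\sum_\ell\ell^{(N-5)/2}=\infty$), and even in $N=1,2$ a triangle-inequality bound on absolute values has no mechanism to produce a margin that vanishes \emph{linearly} at the critical radius $r=0$; the cancellation there is a phase phenomenon, not a size phenomenon. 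So the plan stalls exactly where the theorem lives.

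The idea you are missing is that one should not estimate the off-diagonal terms at all, but kill them identically, and this forces the eigenfunction to be placed at the \emph{gap} scale rather than at the \emph{ball} scale. The paper takes $G:=\bigl[(R_j+r)^2+\Delta\bigr](H_{\gam_j/2}*H_{\gam_j/2})$, a function supported in $B_{\gam_j}$, so that $G(\omega_k-\omega_n)=0$ for all $k\ne n$ in $K_j$ by the gap condition: the Gram form collapses to its diagonal $(2\pi)^NG(0)\sum\abs{x_k}^2$ with no error term in any dimension. The ball radius enters instead through the sign of the Fourier transform, $g(t)=\bigl[(R_j+r)^2-\abs{t}^2\bigr]\abs{h_{\gam_j/2}(t)}^2$, which is $\le 0$ outside $B_{R_j+r}$ and bounded inside, so $\int_{\RR^N}g\abs{x_j}^2\le\alpha\int_{B_{R_j+r}}\abs{x_j}^2$. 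Finally the linear factor comes from the eigenvalue identity you correctly sensed was relevant, but used in the form
\begin{equation*}
G(0)=\bigl((R_j+r)^2-\mu_{\gam_j/2}\bigr)\int_{B_{\gam_j/2}}H_{\gam_j/2}^2\ dx=(2R_j+r)\,r\int_{B_{\gam_j/2}}H_{\gam_j/2}^2\ dx\ge\alpha r,
\end{equation*}
since $\mu_{\gam_j/2}=R_j^2$. In short, the multiplier $(R_j+r)^2+\Delta$ and the convolution $H_{\gam_j/2}*H_{\gam_j/2}$ let the gap and the radius each do their own job (exact vanishing of cross terms, and sign control plus the linear margin, respectively); your single weight $H_\rho$ tries to make one object do both and cannot.
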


\begin{proof}
Setting $G:=\bigl[(R_j+r)^2+\Delta\bigr](H_{\gam_j/2}*H_{\gam_j/2})$ and denoting its Fourier transform by $g$ we have
\begin{equation*}
g(t)=\bigl[(R_j+r)^2-\abs{t}^2\bigr]\abs{h_{\gam_j/2}(t)}^2.
\end{equation*}
We have $g\le 0$ outside $B_{R_j+r}$ and $g\le \alpha$ in $B_{R_j+r}$, so that
writing
\begin{equation*}
x_j(t):=\sum_{k\in K_j} x_k e^{i\omega_k\cdot t}
\end{equation*}
and applying the Fourier inversion formula we obtain that
\begin{equation*}
(2\pi)^NG(0)\sum_{k\in K_j}\abs{x_k}^2
=\int_{\RR^N}g(t)\abs{x_j(t)}^2\ dt
\le \alpha\int_{B_{R_j+r}} \abs{x_j(t)}^2\ dt.
\end{equation*}
It remains to show that $G(0)\ge \alpha r$. Using the variational characterization of the eigenvalue $\mu_{\gam_j/2}=R_j^2$ we have
\begin{align*}
G(0)
&=\int _{B_{\gam_j/2}}(R_j+r)^2H_{\gam_j/2}^2-\abs{\nabla H_{\gam_j/2}}^2\ dx\\
&=((R_j+r)^2-\mu_{\gam_j/2} )\int _{B_{\gam_j/2}} H_{\gam_j/2}^2\ dx\\
&=(2R_j+r)r\int _{B_{\gam/2}} H_{\gam_m/2}^2\ dx\\
&\ge \alpha r.\qedhere
\end{align*}
\end{proof}

In order to improve the last result we need a technical lemma, which generalizes to $N>1$ a well-known property of the function $\frac{\sin\omega}{\omega }$. We define
\begin{equation*}
g(\omega):=\frac{1}{V_1}\int_{B_1 } e^{i\omega\cdot s} \ ds=\frac{1}{V_1}\int_{B_1 } \cos(\omega\cdot s) \ ds,\quad \omega\in\RR^N
\end{equation*}
where $V_1$ denotes the volume of $B_1$. 

\begin{lemma}\label{l23}
We have
\begin{equation*}
\inf_{\abs{\omega}\ge t} 1-g(\omega) \ge \alpha_{m+1} t^2
\end{equation*}
for all $0\le t\le R_0\gam/(2m)$.
\end{lemma}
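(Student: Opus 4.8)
The function $g(\omega)$ is a radial function (since $B_1$ is rotation-invariant), so $g(\omega)$ depends only on $|\omega|$; write $\phi(t) := g(\omega)$ for $|\omega| = t$. Let me think about what I need to show: $1 - g(\omega) \ge \alpha_{m+1} t^2$ whenever $|\omega| \ge t$, for $t$ in the stated range.

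Let me think about the strategy here.

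First, the plan is to reduce to a one-variable statement. Since $g$ is radial, set $\phi(s) = g(\omega)$ for $|\omega| = s$, and I want to bound $1 - \phi(s)$ from below. The claim $\inf_{|\omega| \ge t}(1 - g(\omega)) \ge \alpha_{m+1} t^2$ amounts to showing $1 - \phi(s) \ge \alpha_{m+1} t^2$ for all $s \ge t$. The normalization $g(0) = 1$ (immediate from the definition, since the integrand is $1$ at $\omega = 0$) and $|g(\omega)| \le 1$ for all $\omega$ (the integrand $\cos(\omega\cdot s)$ is bounded by $1$) are the basic facts I'll use.

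Here is the main approach, following the well-known $N=1$ argument for $\frac{\sin\omega}{\omega}$. The key point is that $1 - g(\omega)$ is strictly positive for $\omega \ne 0$ and behaves like a positive multiple of $|\omega|^2$ near the origin. For the behavior near the origin, I would expand $\cos(\omega \cdot s) = 1 - \frac{(\omega\cdot s)^2}{2} + O(|\omega|^4)$ inside the integral, giving
\begin{equation*}
1 - g(\omega) = \frac{1}{V_1}\int_{B_1}\left(1 - \cos(\omega\cdot s)\right)\,ds = \frac{|\omega|^2}{2V_1}\int_{B_1}(\hat\omega \cdot s)^2\,ds + O(|\omega|^4),
\end{equation*}
where $\hat\omega = \omega/|\omega|$. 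By symmetry the integral $\frac{1}{V_1}\int_{B_1}(\hat\omega\cdot s)^2\,ds$ is a positive constant independent of the direction $\hat\omega$ (it equals $\frac{1}{N}\cdot\frac{1}{V_1}\int_{B_1}|s|^2\,ds$). This shows $1 - g(\omega) \ge c|\omega|^2$ for small $|\omega|$ with an explicit $c > 0$.

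To go from a local estimate to a uniform one on the relevant range, I would argue as follows. For $|\omega|$ bounded away from $0$, the quantity $1 - g(\omega)$ is bounded below by a positive constant; indeed $g(\omega) < 1$ for every $\omega \ne 0$ because $\cos(\omega\cdot s)$ cannot equal $1$ identically on $B_1$. So on the compact shell $\delta \le |\omega| \le R_0\gam/(2m)$ the continuous positive function $1 - g(\omega)$ attains a positive minimum, and combining with the quadratic lower bound near $0$ gives a single constant $\alpha_{m+1}$ with $1 - g(\omega) \ge \alpha_{m+1}|\omega|^2$ on the whole ball $|\omega| \le R_0\gam/(2m)$. Finally, taking the infimum over $|\omega| \ge t$: since the bound $1 - g(\omega) \ge \alpha_{m+1} t^2$ need only control the infimum over the shell $|\omega| \ge t$, I use that for $|\omega| \ge t$ within the ball we have $1 - g(\omega) \ge \alpha_{m+1}|\omega|^2 \ge \alpha_{m+1} t^2$, and for $|\omega|$ beyond the ball one uses $1 - g(\omega) \ge \inf_{|\eta| = R_0\gam/(2m)}(1 - g(\eta))$ together with the range restriction $t \le R_0\gam/(2m)$ to keep the bound $\ge \alpha_{m+1} t^2$.

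The main obstacle I anticipate is the step where I claim uniformity of the constant across \emph{all} directions and the transition from the near-origin quadratic estimate to the global shell estimate; specifically, ensuring that the lower bound $1 - g(\omega) \ge \alpha_{m+1}|\omega|^2$ holds with one constant on the full range rather than just asymptotically. The cleanest route is to define $\alpha_{m+1} := \inf_{0 < |\omega| \le R_0\gam/(2m)} \frac{1 - g(\omega)}{|\omega|^2}$ and verify this infimum is strictly positive by checking that the integrand $\frac{1-g(\omega)}{|\omega|^2}$ extends continuously and positively to $\omega = 0$ (via the expansion above) and stays positive on the punctured shell (via $g(\omega) < 1$). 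Radial symmetry keeps everything a genuinely one-dimensional analysis, which is what makes the argument short.
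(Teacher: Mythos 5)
Your argument is fine near the origin (the Taylor expansion, radial symmetry, and the resulting bound $1-g(\omega)\ge c\abs{\omega}^2$ for small $\abs{\omega}$ match the paper), and fine on any compact shell. But there is a genuine gap at infinity. The infimum in the lemma is taken over the \emph{unbounded} set $\set{\abs{\omega}\ge t}$, and a continuous function that is strictly positive on an unbounded closed set need not have a positive infimum there. Your compactness argument only covers $\delta\le\abs{\omega}\le R_0\gam/(2m)$, and the step you use to handle the exterior --- ``for $\abs{\omega}$ beyond the ball one uses $1-g(\omega)\ge \inf_{\abs{\eta}=R_0\gam/(2m)}(1-g(\eta))$'' --- is unjustified. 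That inequality amounts to claiming $g(\omega)\le \max_{\abs{\eta}=R}g(\eta)$ for all $\abs{\omega}>R$, i.e.\ a monotonicity-type property of the radial profile of $g$, which is false in general: this $g$ is an oscillating (Bessel-type) radial function, and if $R$ happens to sit near a trough where $g$ is negative, then $g$ at a later peak exceeds $g$ on the sphere of radius $R$. Your ``cleanest route'' at the end, defining $\alpha_{m+1}$ as an infimum over $0<\abs{\omega}\le R_0\gam/(2m)$, has the same defect: it says nothing about $\omega$ outside that ball, while the lemma requires control there.

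The missing ingredient is decay of $g$ at infinity. The paper proves a Riemann--Lebesgue type statement: $g(\omega)\to 0$ as $\abs{\omega}\to\infty$, obtained by approximating the characteristic function of $B_1$ in $L^1(\RR^N)$ by step functions and computing explicitly on an interval $I=[-a,a]^N$, where
\begin{equation*}
\left\vert \int_I e^{i\omega\cdot s}\ ds\right\vert
= a^N\left\vert \prod_{j=1}^N \frac{\sin\omega_j a}{\omega_j a}\right\vert
\le a^N\frac{\sqrt N}{\abs{\omega}a}\to 0 .
\end{equation*}
Once you know $g(\omega)\to 0$, you get $1-g(\omega)\ge 1/2$ (say) outside some large ball, and then continuity plus strict positivity of $1-g$ on the remaining compact annulus $\set{t_0\le\abs{\omega}\le \text{large}}$ gives $\inf_{\abs{\omega}\ge t_0}(1-g(\omega))>0$; shrinking $\alpha_{m+1}$ then yields the stated bound for all $t\le R_0\gam/(2m)$. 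Without some such control at infinity your proof does not close.
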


\begin{proof}
First we observe that $g(\omega)$ depends only on $\abs{\omega}$. Using Taylor's formula and assuming by rotation invariance that $\omega$ is parallel to the first coordinate axis, for $\omega\to 0$ we have
\begin{align*}
1-g(\omega)
&=\frac{1}{V_1}\int_{B_1 } 1-\cos(\omega\cdot s) \ ds\\
&=\frac{1}{2V_1}\int_{B_1 } \abs{\omega\cdot s}^2 \ ds+O(\abs{\omega}^4)\\
&= \frac{\abs{\omega}^2}{2V_1}\int_{B_1 } s_1^2 \ ds+O(\abs{\omega}^4)\\
&=\frac{\abs{\omega}^2}{2NV_1}\int_{B_1 } \abs{s}^2 \ ds+O(\abs{\omega}^4)\\
&= \frac{\abs{\omega}^2}{2N+4}+O(\abs{\omega}^4)
\end{align*}
because denoting the surface area of $B_1$ by $\beta$ we have
\begin{equation*}
V_1=\int_0^1\beta t^{N-1}\ dt=\frac{\beta }{N}
\quad\text{and}\quad
\int_{B_1 } \abs{s}^2 \ ds=\int_0^1\beta t^{N+1}\ dt=\frac{\beta }{N+2}.
\end{equation*}
There exists therefore a number $0<t_0<R_0\gam/(2m)$ such that
\begin{equation}\label{21}
1-g(\omega) \ge \alpha_{m+1} \abs{\omega}^2\quad\text{for all $\omega$ satisfying}\quad \abs{\omega}\le t_0.
\end{equation}

Next we show that $g(\omega)\to 0$ if $\abs{\omega}\to\infty$. Since the characteristic function of $B_1$ may be approximated in $L^1(\RR^N)$ by step functions, it suffices to show that
\begin{equation*}
\int_I e^{i\omega\cdot s} \ ds\to 0\quad\text{as}\quad\abs{\omega}\to\infty
\end{equation*}
for each fixed $N$-dimensional interval $I=[-a,a]^N$. (A translation of $I$ does not change the absolute value of the integral.) Using the inequality
\begin{equation*}
\max_j\abs{\omega_j}\ge \frac{\abs{\omega}}{\sqrt{N}}
\end{equation*}
we have
\begin{equation*}
\left\vert \int_I e^{i\omega\cdot s} \ ds\right\vert
=a^N\left\vert\prod_{j=1}^N\frac{\sin \omega_ja}{\omega_ja}\right\vert
\le a^N\frac{\sqrt{N}}{\abs{\omega}a}\to 0.
\end{equation*}

Since $g$ is continuous and $1-g(\omega)>0$ for all $\omega\ne 0$, it follows that
\begin{equation*}
\inf_{\abs{\omega}\ge t_0} 1-g(\omega) >0.
\end{equation*}
Therefore, by diminishing the constant $\alpha_{m+1}$ in \eqref{21} we may also assume that
\begin{equation}\label{22}
1-g(\omega) \ge \alpha_{m+1} \left( \frac{R_0\gamma}{2m}\right) ^2\quad\text{whenever}\quad \abs{\omega}\ge t_0.
\end{equation}

It follows from \eqref{21} and \eqref{22} that
\begin{equation*}
1-g(\omega) \ge \alpha_{m+1} t^2\quad\text{whenever}\quad \abs{t}\le\min\left\lbrace \abs{\omega}, \frac{R_0\gamma}{2m}\right\rbrace ,
\end{equation*}
and this is equivalent to the statement of the lemma.
\end{proof}

The following result is an adaptation of a method due to Haraux \cite{Har1989}.

\begin{lemma}\label{l24}
Add an arbitrary element $k'\in K$ to some $K_j$ and denote the enlarged set by $K_j'$.
Then we have
\begin{equation*}
\alpha_j' r^5\sum_{k\in K_j'}\abs{x_k}^2\le \int_{B_{R_j+2r}}\left\vert \sum_{k\in K_j'} x_k e^{i\omega_k\cdot t}\right\vert^2\ dt.
\end{equation*}
\end{lemma}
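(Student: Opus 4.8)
My plan is to isolate the newly added frequency $\omega_{k'}$ by an averaging--difference operator and then fall back on Lemma \ref{l22} for the frequencies already in $K_j$; the difficulty is that $\omega_{k'}$ may be only $\gam$-close to $K_j$, so its internal gap is destroyed and Lemma \ref{l22} cannot be applied to $K_j'$ directly. Write $f(t):=\sum_{k\in K_j'}x_ke^{i\omega_k\cdot t}$ and, taking the radius $r$ as the averaging scale, set
\begin{equation*}
w(t):=f(t)-\frac{1}{\abs{B_r}}\int_{B_r}f(t+s)e^{-i\omega_{k'}\cdot s}\,ds.
\end{equation*}
A term-by-term computation together with the scaling identity $\abs{B_r}^{-1}\int_{B_r}e^{i\nu\cdot s}\,ds=g(r\nu)$ gives
\begin{equation*}
w(t)=\sum_{k\in K_j'}x_k\bigl(1-g(r(\omega_k-\omega_{k'}))\bigr)e^{i\omega_k\cdot t}=\sum_{k\in K_j}x_k\bigl(1-g(r(\omega_k-\omega_{k'}))\bigr)e^{i\omega_k\cdot t},
\end{equation*}
because the factor vanishes for $k=k'$. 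Thus $w$ is a sum over the well-separated set $K_j$ only, with the modified coefficients $y_k:=x_k(1-g(r(\omega_k-\omega_{k'})))$.

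For the first step I would bound $\sum_{k\in K_j}\abs{x_k}^2$ from above. Applying Lemma \ref{l22} to $w$ (whose frequencies lie in $K_j$) gives $\alpha_j r\sum_{k\in K_j}\abs{y_k}^2\le\int_{B_{R_j+r}}\abs{w}^2$. The point is that the distortion factors are bounded below: since $k\ne k'$ forces $\abs{\omega_k-\omega_{k'}}\ge\gam$ and $r\gam\le R_0\gam/(2m)$, Lemma \ref{l23} yields $1-g(r(\omega_k-\omega_{k'}))\ge\alpha r^2$, whence $\abs{y_k}^2\ge\alpha r^4\abs{x_k}^2$. On the other hand $B_{R_j+r}+B_r\subset B_{R_j+2r}$, so Minkowski's inequality gives $\int_{B_{R_j+r}}\abs{w}^2\le 4\int_{B_{R_j+2r}}\abs{f}^2$. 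Combining the three estimates produces $\alpha r^5\sum_{k\in K_j}\abs{x_k}^2\le\int_{B_{R_j+2r}}\abs{f}^2$; this is where the exponent $5$ arises, namely one power of $r$ from Lemma \ref{l22} and four from the squared distortion factor.

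For the second step I would recover $\abs{x_{k'}}^2$: writing $x_{k'}e^{i\omega_{k'}\cdot t}=f(t)-\sum_{k\in K_j}x_ke^{i\omega_k\cdot t}$, integrating the square over $B_{R_j+2r}$, and using the triangle inequality together with the upper bound (Lemma \ref{l21}, valid on any ball of radius $\le 2R_0$) for the $K_j$-sum gives $\abs{x_{k'}}^2\le\alpha r^{-5}\int_{B_{R_j+2r}}\abs{f}^2$, since $r$ is bounded above and $\abs{B_{R_j+2r}}$ is bounded below. Adding the two steps yields $\sum_{k\in K_j'}\abs{x_k}^2\le\alpha r^{-5}\int_{B_{R_j+2r}}\abs{f}^2$, which is the claim. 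The main obstacle is the first step, and its heart is Lemma \ref{l23}: without the quantitative quadratic lower bound on $1-g$ one could not control the distorted coefficients $y_k$, and the whole scheme of trading the loss of the gap condition (caused by the intruding frequency $\omega_{k'}$) for extra powers of $r$ would collapse.
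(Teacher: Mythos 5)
Your proposal is correct and follows essentially the same route as the paper: the same Haraux-type averaging--difference operator annihilating the coefficient of $\omega_{k'}$, the same use of Lemma \ref{l23} (with $t=r\gam$) to get the $r^4$ loss, Lemma \ref{l22} for the distorted coefficients, and Lemma \ref{l21} plus the lower-bounded volume of $B_{R_j+2r}$ to recover $\abs{x_{k'}}^2$. The only cosmetic difference is that you bound $\int_{B_{R_j+r}}\abs{w}^2$ by Minkowski's integral inequality where the paper uses Cauchy--Schwarz and Fubini, yielding the same factor $4$.
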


\begin{proof}
Writing
\begin{equation*}
x(t):=\sum_{k\in K_j'} x_k e^{i\omega_k\cdot t}
\end{equation*}
and introducing the function
\begin{equation*}
y(t):=x(t)-\frac{1}{V_1}\int_{B_1} e^{-ir\omega_{k'}\cdot s}x(t+rs)\ ds
\end{equation*}
we have
\begin{equation*}
y(t)=\sum_{k\in K_j}(1-g(r\omega_k-r\omega_{k'}))x_ke^{i\omega_k\cdot t}=:\sum_{k\in K_j}y_ke^{i\omega_k\cdot t}.
\end{equation*}
Since $\abs{r\omega_k-r\omega_{k'}}\ge r\gam$ for all $k\in K_j$, using Lemmas \ref{l22} and \ref{l23} we have
\begin{equation}\label{23}
\alpha_j\alpha_{m+1}^2\gam^4r^5\sum_{k\in K_j}\abs{x_k}^2\le \alpha_j r\sum_{k\in K_j }\abs{y_k}^2\le \int_{B_{R_j+r}}\abs{y(t)}^2\ dt.
\end{equation}
Furthermore,
\begin{align*}
\abs{y(t)}^2&\le 2\abs{x(t)}^2+2\Bigl |\frac{1}{V_1}\int_{B_1 } e^{-i\omega_{k'}\cdot s}x(t+rs)\ ds\Bigr |^2\\
&\le  2\abs{x(t)}^2+\frac{2}{V_1}\int_{B_1 }\abs{x(t+rs)}^2\ ds\\
&=2\abs{x(t)}^2+\frac{2}{V_r}\int_{t+B_r} \abs{x(s)}^2\ ds
\end{align*}
where $V_r$ denotes the volume of $B_r$, so that
\begin{align*}
\int_{B_{R_j+r}}\abs{y(t)}^2\ dt
&\le 2\int_{B_{R_j+r}} \abs{x(t)}^2\ dt
+\frac{2}{V_r }\int_{B_{R_j+r}}\int_{t+B_r}
\abs{x(s)}^2\ ds \ dt\\
&= 2\int_{B_{R_j+r}} \abs{x(t)}^2\ dt
+\frac{2}{V_r }\int_{B_{R_j+2r}}\int_{(s+B_r)\cap B_{R_j+r} }
\abs{x(s)}^2\ dt \ ds\\
&\le  2\int_{B_{R_j+r}} \abs{x(t)}^2\ dt
+ 2 \int_{B_{R_j+2r}} \abs{x(s)}^2  \ ds\\
&\le  4 \int_{B_{R_j+2r}} \abs{x(s)}^2  \ ds.
\end{align*}
Combining this with \eqref{23} we conclude that
\begin{equation}\label{24}
\alpha_j\alpha_{m+1}^2\gam^4r^5\sum_{k\in K_j}\abs{x_k}^2\le 4\int_{B_{R_j+2r}} \abs{x(t)}^2  \ dt.
\end{equation}

It remains to estimate $x_{k'}$. We have
\begin{align*}
V_{R_j+2r}\abs{x_{k'}}^2
&=\int_{B_{R_j+2r}}\Bigl\vert x_{k'}e^{i\omega_{k'}\cdot t}\Bigr\vert^2\ dt\\
&\le 2\int_{B_{R_j+2r}} \abs{x(t)}^2+\Bigl\vert \sum_{k\in K_j} x_k e^{i\omega_k\cdot t}\Bigr\vert^2\ dt\\
&\le 2\int_{B_{R_j+2r}} \abs{x(t)}^2\ dt + 2\alpha_0\sum_{k\in K_j}\abs{x_k}^2.
\end{align*}
Combining with \eqref{24} we get finally that
\begin{align*}
\alpha_j\alpha_{m+1}^2\gam^4r^5\sum_{k\in K_j'}\abs{x_k}^2
&\le \left(4+\frac{2\alpha_j\alpha_{m+1}^2\gam^4r^5}{V_{R_j+2r}}+\frac{8\alpha_0}{V_{R_j+2r}}\right) \int_{B_{R_j+2r}} \abs{x(t)}^2  \ dt\\
&\le \alpha\int_{B_{R_j+2r}} \abs{x(t)}^2  \ dt.\qedhere
\end{align*}
\end{proof}

We recall the following classical result on biorthogonal sequences.

\begin{lemma}\label{l25}
Let $(f_k)_{k\in K}$ be a family of vectors in a Hilbert space $H$. Assume that
\begin{equation*}
c_1'\sum_{k\in K}\abs{x_k}^2\le
\Bigl\Vert \sum_{k\in K} x_kf_k\Bigr\Vert ^2\le c_2'\sum_{k\in K}\abs{x_k}^2
\end{equation*}
for all finite linear combinations of these vectors, with two positive constants
$c_1'$ and $c_2'$. Then there exists another  family $(y_k)_{k\in K}$ of vectors in $H$
such that
\begin{equation*}
(y_k,f_n)=\delta_{kn}
\end{equation*}
for all $k,n\in K$. Moreover, both families are bounded in $H$:
\begin{equation*}
\norm{f_k}\le \sqrt{c_2'}\quad\text{and}\quad \norm{y_k}\le 1/\sqrt{c_1'}
\end{equation*}
for all $k$.
\end{lemma}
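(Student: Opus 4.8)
The upper estimate gives the bound $\norm{f_k}\le\sqrt{c_2'}$ at once: take $x_k=1$ and all other coefficients equal to $0$.

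For the rest, the plan is to read the two-sided estimate as the statement that the synthesis map $T\colon(x_k)\mapsto\sum_k x_kf_k$, initially defined on finitely supported sequences, is bounded above and bounded below. First I would extend $T$ to all of $\ell^2(K)$: by the upper estimate the partial sums of $\sum_k x_kf_k$ form a Cauchy net, so the series converges for every $(x_k)\in\ell^2(K)$ and both inequalities pass to the limit by continuity. Writing $V:=T(\ell^2(K))$, the lower estimate shows that $T$ is injective and that $V$ is closed, so that $T\colon\ell^2(K)\to V$ is an isomorphism of Hilbert spaces. In particular every $v\in V$ has a \emph{unique} representation $v=\sum_n x_nf_n$ with $(x_n)\in\ell^2(K)$.

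Next, for each fixed $k$ I would introduce the coefficient functional $\lambda_k(v):=x_k$, where $(x_n)$ is the unique $\ell^2$ representation of $v\in V$. By uniqueness $\lambda_k$ is a well-defined linear functional on $V$ satisfying $\lambda_k(f_n)=\delta_{kn}$, and it is bounded because the lower estimate gives
\begin{equation*}
\abs{\lambda_k(v)}=\abs{x_k}\le\Bigl(\sum_n\abs{x_n}^2\Bigr)^{1/2}\le\frac{1}{\sqrt{c_1'}}\,\norm{v},
\end{equation*}
so that $\norm{\lambda_k}\le 1/\sqrt{c_1'}$. Applying the Riesz representation theorem on the Hilbert space $V$ yields a vector $y_k\in V\subseteq H$ with $\lambda_k(v)=(v,y_k)$ for all $v\in V$ and $\norm{y_k}=\norm{\lambda_k}\le 1/\sqrt{c_1'}$. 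Taking $v=f_n$ gives $(f_n,y_k)=\delta_{kn}$, whence $(y_k,f_n)=\delta_{kn}$ since these values are real, which is the desired biorthogonality.

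The only genuine work is the first paragraph: establishing that $T$ is an isomorphism onto its closed range, so that the coefficient functionals are well defined. This is exactly where both constants enter — the upper bound guarantees convergence of the infinite series, while the lower bound yields injectivity, closedness of $V$, and the norm estimate on $\lambda_k$. Once this is in place, the construction of the biorthogonal family and the bound $\norm{y_k}\le 1/\sqrt{c_1'}$ are immediate consequences of the Riesz representation theorem.
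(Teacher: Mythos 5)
Your proof is correct, but it follows a genuinely different route from the paper's. The paper gives a one-line geometric construction: it sets
\begin{equation*}
y_n:=\norm{f_n-w_n}^{-2}(f_n-w_n),
\end{equation*}
where $w_n$ is the orthogonal projection of $f_n$ onto the closed linear span of the remaining vectors $\set{f_k:k\ne n}$. Biorthogonality is then immediate, since $f_n-w_n$ is orthogonal to that span and $(f_n-w_n,f_n)=\norm{f_n-w_n}^2$; the norm bound follows because the lower Riesz estimate forces the distance from $f_n$ to the span of the others to be at least $\sqrt{c_1'}$ (for any finite combination, $\norm{f_n-\sum_{k\ne n}c_kf_k}^2\ge c_1'(1+\sum\abs{c_k}^2)\ge c_1'$, and this survives the passage to the closure), so $\norm{y_n}=\norm{f_n-w_n}^{-1}\le 1/\sqrt{c_1'}$. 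You instead work with the synthesis operator $T$ on $\ell^2(K)$, prove it is an isomorphism onto its closed range $V$, and obtain $y_k$ by applying the Riesz representation theorem to the coefficient functionals $\lambda_k$ on $V$. Your argument is longer --- it requires extending $T$ to infinite sums and the closed-range argument, neither of which the paper needs --- but it buys a more structural picture: $V\cong\ell^2(K)$, and your $y_k$ automatically lie in the closed span of the $f_n$, which pins them down uniquely. In fact the two constructions produce exactly the same vectors, since the paper's $y_n$ also belong to $V$, and a biorthogonal family inside $V$ is unique; the difference is purely in the mechanism (explicit projection formula versus duality), and both use the lower bound $c_1'$ at precisely one point, to control either $\norm{f_n-w_n}$ or $\norm{\lambda_k}$.
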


\begin{proof}
The formula
\begin{equation*}
y_n:=\norm{f_n-w_n}^{-2}(f_n-w_n)
\end{equation*}
where $w_n$ denotes the orthogonal projection of $f_n$ onto the closed linear subspace of $H$ spanned by the remaining vectors $f_k$, defines a family with the required properties.
\end{proof}

The proof of the theorem can now be completed by following Kahane's method \cite{Kah1962}. Applying Lemmas \ref{l22} and \ref{l25} we define for each $k\in K_j$ a function $\psi_{k,j}\in L^2(B_{R_j+r})$ satisfying
$\hat\psi_{k,j}(\omega_k)=1$, $\hat\psi_{k,j}(\omega_n)=0$ for all $n\in K_j\setminus\set{k}$, and (using H\"older's inequality)
\begin{equation*}
\norm{\psi_{k,j}}_{L^1(B_{R_j+r})}^2\le \frac{V_{R_j+r}}{\alpha_jr}\le \frac{\alpha }{r}.
\end{equation*}
Analogously, using Lemmas \ref{l24} and \ref{l25} we define for each $k\in K\setminus K_j$ a function $\psi_{k,j}\in L^2(B_{R_j+2r})$ such that
$\hat\psi_{k,j}(\omega_k)=1$, $\hat\psi_{k,j}(\omega_n)=0$ for all $n\in K_j$, and
\begin{equation*}
\norm{\psi_{k,j}}_{L^1(B_{R+j+2r})}^2\le \frac{V_{R_j+2r}}{\alpha_j'r^5}\le \frac{\alpha }{r^5}.
\end{equation*}
Setting
\begin{equation*}
\rho_k:=\psi_{k,1}*\cdots *\psi_{k,m}
\end{equation*}
we get $\rho_k\in L^2(B_{R-r})$ satisfying $\hat\rho_k(\omega_n)=\delta_{kn}$ for all $n,k\in K$, and
\begin{equation}\label{25}
\norm{\hat\rho_k}_{L^{\infty}(\RR^N)}^2\le \norm{\rho_k}_{L^1(B_{R-r})}^2\le \frac{\alpha}{r^{5m-4}}.
\end{equation}

Now we introduce the functions
\begin{equation*}
G:=H_{r/2}*H_{r/2},\quad g:=\abs{h_{r/2}}^2,\quad G_k(t):= G(t)e^{i\omega_kt},
\end{equation*}
and for every
\begin{equation*}
x(t)=\sum_{k\in K} x_k e^{i\omega_k\cdot t}
\end{equation*}
we define
\begin{equation*}
y(t)=(2\pi)^{-N}\sum_{k\in K} x_k (\rho_k*G_k)(t).
\end{equation*}
A straightforward computation and Plancherel's formula show that
\begin{equation}\label{26}
(y,x)_{L^2(B_{R})}=g(0)\sum_{k\in K}\abs{x_k}^2=h(0)^2\Bigl(\frac{r}{2}\Bigr)^{2N}\sum_{k\in K}\abs{x_k}^2
\end{equation}
and
\begin{equation*}
\norm{y}^2_{L^2(B_{R})}=(2\pi)^{-N}\int_{\RR^N}
\Bigl\vert\sum_{k\in K} x_k\widehat \rho_k(\omega)g_k(\omega)\Bigr\vert^2\ d\omega
\end{equation*}
where $g_k$ denotes the Fourier transform of $G_k$. Since $g_k\ge 0$, using \eqref{25} it follows that
\begin{equation*}
r^{5m-4}\norm{y}^2_{L^2(B_{R})}
\le \alpha\int_{\RR^N}
\Bigl\vert\sum_{k\in K} \abs{x_k} g_k(\omega)\Bigr\vert^2\ d\omega.
\end{equation*}
Using Plancherel's equality again, this is equivalent to the inequality
\begin{equation*}
r^{5m-4}\norm{y}_{L^2(B_{R})}^2
\le \alpha \int_{\RR^N}
\Bigl\vert G(t)\sum_{k\in K} \abs{x_k}  e^{i\omega_k\cdot t}\Bigr\vert^2\ dt.
\end{equation*}
Since $G(t)$ vanishes outside $B_{r}$ and
\begin{equation*}
\norm{G}_{L^\infty(\RR^N)}\le \norm{H_{r/2}}_{L^2(B_{r/2})}^2=\alpha r^N,
\end{equation*}
it follows that
\begin{equation*}
r^{5m-4}\norm{y}_{L^2(B_{R})}^2
\le \alpha r^{2N}\int_{B_{r}}
\Bigl\vert\sum_{k\in K} \abs{x_k} e^{i\omega_k\cdot t}\Bigr\vert^2\ dt.
\end{equation*}
Applying Lemma \ref{l21} we conclude that
\begin{equation}\label{27}
r^{5m-4}\norm{y}_{L^2(B_{R})}^2
\le \alpha r^{2N}\sum_{k\in K} \abs{x_k}^2.
\end{equation}
Combining \eqref{26} and \eqref{27} with the Cauchy--Schwarz inequality
\begin{equation*}
\left\vert (y,x)_{L^2(B_{R})}\right\vert^2\le \norm{y}_{L^2(B_{R})}^2\norm{x}_{L^2(B_{R})}^2
\end{equation*}
we obtain that
\begin{equation*}
r^{4N}\left( \sum_{k\in K} \abs{x_k}^2\right) ^2\le \alpha r^{2N+4-5m}\left( \sum_{k\in K} \abs{x_k}^2\right)\norm{x}_{L^2(B_{R})}^2
\end{equation*}
and hence
\begin{equation*}
r^{5m-4+2N}\sum_{k\in K}\abs{x_k}^2\le\alpha \norm{x}_{L^2(B_{R})}^2
\end{equation*}
as stated.

\remove{\emph{Questions.}

\begin{itemize}
\item Mehrenberger for $N>1$?

\item TenTuc  for $N>2$?
\end{itemize}}

\end{document}